\renewcommand{\a}{{\bf a}}
\renewcommand{\b}{{\bf b}}
\renewcommand{\c}{{\bf c}}
\newcommand{\e}{{\bf e}}
\renewcommand{\r}{{\bf r}}
\renewcommand{\u}{{\bf u}}
\renewcommand{\v}{{\bf v}}
\newcommand{\w}{{\bf w}}
\newcommand{\x}{{\bf x}}
\newcommand{\zero}{{\bf 0}}
\renewcommand{\aa}{{\bf A}}
\newcommand{\ee}{{\bf E}}
\newcommand{\hh}{{\bf H}}
\renewcommand{\ll}{{\bf L}}
\newcommand{\mm}{{\bf M}}
\newcommand{\rr}{{\bf R}}
\newcommand{\ww}{{\bf W}}
\newcommand{\aaa}{\mathbb{A}}
\renewcommand{\lll}{\mathbb{L}}
\newcommand{\nnn}{\mathbb{N}}
\newcommand{\rrr}{\mathbb{R}}
\newcommand{\dddd}{\mathcal{D}}
\newcommand{\btau}{{\bm\tau}}
\newcommand{\bsigma}{{\bm\sigma}}
\newcommand{\bepsilon}{{\bm\epsilon}}
\newcommand{\tr}{{\rm tr}}
\newcommand{\transpose}{\texttt{t}}
\newcommand{\seba}{Sebasti\'an A. Dom\'inguez-Rivera}
\newcommand{\sebaorgs}{Department of Mathematics and Statistics, University of Saskatchewan}
\newcommand{\sebacountry}{Saskatoon, Canada}
\newcommand{\sebalocation}{\sebaorgs, \sebacountry.}
\newcommand{\sebaemail}{s.dominguez@usask.ca}
\newcommand{\gonzalo}{Gonzalo A. Benavides}
\newcommand{\gonzaorgs}{Department of Mathematics, University of Maryland}
\newcommand{\gonzacountry}{College Park, USA}
\newcommand{\gonzalocation}{\gonzaorgs, \gonzacountry.}
\newcommand{\etallastname}{Benavides and Dom\'inguez-Rivera}
\newcommand{\funding}{This work was partially supported by the Pacific Institute for the Mathematical Sciences (PIMS).}
\newtheorem{theorem}{Theorem}
\newtheorem{remark}{Remark}
\title{Korn's inequality in anisotropic Sobolev spaces\thanks{\funding}}
\author{{\gonzalo\thanks{\gonzalocation} 
\quad \seba\thanks{\sebalocation}{\,\,\,\thanks{Corresponding author: 
\href{mailto:\sebaemail}{\sebaemail}.}} 
}}
\begin{document}

\maketitle

\begin{abstract}
Korn's inequality has been at the heart of much exciting research since its first appearance in the beginning of the 20th century. Many are the applications of this inequality to the analysis and construction of discretizations of a large variety of problems in continuum mechanics. In this paper, we prove that the classical Korn inequality holds true in {\it anisotropic Sobolev spaces}. We also prove that an extension of Korn's inequality, involving non-linear continuous maps, is valid in such spaces. Finally, we point out that another classical inequality, Poincare's inequality, also holds in anisotropic Sobolev spaces.

\begin{center}
  {\it This paper is dedicated to prof. Nilima Nigam on the occasion of her 50th birthday.}
\end{center}
\end{abstract}

{\bf Keywords}: Korn's inequality, Poincare's inequality, anisotropic Sobolev spaces, continuum mechanics
\vspace{.25cm}

{\bf MSC}: 
26D10, 35A23, 74B05, 74B20.

\section{Introduction}\label{section:intro}
Many linear and non-linear problems in continuum mechanics are of great interest in mathematics, engineering, and physics~\cite{ref:ciarlet2021}. As part of the analysis of such problems, Korn's inequality~\cite{ref:korn1906,ref:korn1908,ref:korn1909} has been intensively investigated, extended, and used to establish their well-posedness; see, e.g.~\cite{ref:horgan1995,ref:gatica2014}. It has particularly become an important part in the investigation, both theoretically and numerically speaking, of problems in linear elasticity, fluid-solid interaction, Newtonian fluids, fluids interface problems, eigenproblems in continuum mechanics, among others; see, for instance~\cite{ref:acosta2017,ref:hsiao2020,ref:gavioli2021,ref:dominguez2021,ref:dominguez2022}. 

The classical Korn inequality, named after A. Korn who first proved it in the beginning of the 1900s~\cite{ref:korn1906,ref:korn1908,ref:korn1909}, reads, in the usual notation for Sobolev spaces, as follows:
\begin{align}\label{eq:1stkorn}
  \|\u\|_{1,2,\Omega}\leq\,C\big(\|\bepsilon(\u)\|_{0,2,\Omega}+\|\u\|_{0,2,\Omega}\big),\quad\forall\,\u\in\hh^1(\Omega),
\end{align}
where $C>0$ is some positive constant, $\Omega$ is an open and bounded domain, and $\hh^1(\Omega)$ denotes the usual Hilbert space $H^1$ for vector fields defined on $\Omega$. The term $\bepsilon$ stands for the usual strain stress tensor, defined as the symmetric part of the gradient of a vector field in $\hh^1(\Omega)$. One can also obtain a sharper inequality as follows
\begin{align}\label{eq:2ndkorn}
  \inf_{\r\in\ker(\bepsilon)}\|\u-\r\|_{1,2,\Omega}\leq\, C\|\bepsilon(\u)\|_{0,2,\Omega},\quad\forall\,\u\in\hh^1(\Omega),
\end{align}
where $\ker(\bepsilon)=\{\v\in\hh^1(\Omega):\bepsilon(\v)=\zero\}$ denotes the kernel of the strain tensor. Many versions and extensions of these inequalities have been proven over the last decades~\cite{ref:friedrichs1947,ref:ting1972,ref:nitsche1981,ref:wang2003,ref:bauer2016-1,ref:ciarlet2005,ref:Kondratev1988,ref:dain2006}. Some recent work regarding Korn's inequalities has extended this inequality to vector fields belonging to the Sobolev space $\ww^{1,p}(\Omega)$ for $p\in(1,\infty)$, has studied it on general domains such as John domains and $(\epsilon,\delta)$-domains~\cite{ref:duran2004,ref:acosta2006,ref:duran2006,ref:acosta2016,ref:jiang2015,ref:jiang2017,ref:Friedrich2018,ref:lopez2018a,ref:lopez2018b}. There has also been development in proving that Korn's inequality in \eqref{eq:1stkorn} is valid even when the $\ll^2$-norm of the vector fields in $\hh^1(\Omega)$ is replaced by more general continuous mappings~\cite{ref:brenner2003,ref:ciarlet2015,ref:damlamian2018,ref:dominguez2021,ref:chipot2021}. 
Others have studied the necessary conditions needed for Korn's inequality in \eqref{eq:2ndkorn} may hold true. In this case we see that the essential part is to restrict the space $\hh^1(\Omega)$ to subspaces in which the kernel of the strain stress tensor is excluded. In particular, if, for example, tangential or normal components of the vector fields are zero on the boundary of the domain, then certain domains still support infinitesimal rigid motions; see, e.g.~\cite{ref:desvillettes2002,ref:bauer2016-2,ref:dominguez2022}. There has also been motivation to prove that Korn's inequality extends to incompatible tensor fields~\cite{ref:neff2002,ref:neff2011,ref:neff2012,ref:lewintan2021a,ref:lewintan2021b,ref:lewintan2021c}. On the other hand, Korn's inequality may fail in certain situations; for example, we note that Korn's inequality, at least in some of its forms, fails for vector fields in $\ww^{1,1}(\Omega)$; see e.g.~\cite{ref:ornstein1962,ref:conti2005}. See~\cite{ref:Neff2014} for some counterexamples regarding some some types of generalized Korn's inequality.

In this paper we prove that the classical Korn inequalities in \eqref{eq:1stkorn} and \eqref{eq:2ndkorn} hold true in more general Sobolev spaces than the classical $\ww^{1,p}(\Omega)$ spaces.
In fact, we prove that the Korn inequalities in \eqref{eq:1stkorn} and \eqref{eq:2ndkorn} are valid in {\it anisotropic Sobolev spaces}, which we denote by $\ww^{1,p,q}(\Omega)$. In such spaces, the vector fields considered belong to $\ll^p(\Omega)$ while their partial derivatives, in all possible directions, belong to $L^q(\Omega)$, with $1\leq p,q\leq\infty$. With such a definition, we note that $\ww^{1,p,p}(\Omega)$ reduces to the usual Sobolev spaces $\ww^{1,p}(\Omega)$. These more general Sobolev spaces were introduced (in an even more general way) and extensively studied in the works of R\'akosn\'ik~\cite{Rakosnik1979,Rakosnik1981}. More recently, Han utilized them as the appropriate setting for the study of existence of positive solutions of differential equations, see for instance~\cite{Han2020,han2022}. Additionally, we extend the contributions in~\cite{ref:gatica2007,ref:damlamian2018,ref:dominguez2021}, regarding a non-linear version of Korn's inequality, to these anisotropic Sobolev spaces. As a key factor in our proofs, we use that anisotropic Sobolev spaces are compactly embedded into the space $\ll^p(\Omega)$, for some values of $p$ and $q$; see, e.g.~\cite{Han2020}. Furthermore, as a direct consequence of the results mentioned above, we show that a generalized version of Poincare's inequality is also valid in these anisotropic Sobolev spaces.

The rest of the paper is organized as follows: in \autoref{section:anisotropic} we introduce the notation to be used throughout this paper and provide an overview of anisotropic Sobolev spaces. In \autoref{section:korns}, we state and prove that Korn's inequality, and an extension of it, holds in anisotropic Sobolev spaces. Finally, we draw some conclusions and point out some future work in \autoref{section:conclusion}

\section{Sobolev spaces}\label{section:anisotropic}
In this section we introduce some notation and give a brief overview of anisotropic Sobolev spaces, including the results needed for our study.

\subsection{Notation}\label{subsection:notation}
We begin this section by introducing the notation to be used throughout this paper. Vector fields will be denoted with lower-case bold alphabet letters whereas tensor fields are denoted with bold Greek letters. When using matrices, these will be denoted with bold upper-case alphabet letters. Between vector fields, the operation $\a\cdot\b$ is the 
standard dot product with induced norm $\|\cdot\|$. In turn, for tensors $\bsigma$ and $\btau$, the double dot product $\bsigma:\btau = \tr(\btau^\transpose\bsigma)$ is 
the usual inner product for matrices, which induces the Frobenius norm $\|\cdot\|_F$. 

For an open domain $\Omega$ of $\rrr^d$, $d\in\nnn$, the space $\dddd(\Omega)$ denotes all $C^{\infty}$ functions with compact support in $\Omega$, and $\dddd'(\Omega)$ denotes the space of distributions over $\Omega$. The space $W^{s,p}(\Omega)$ denotes the usual Sobolev space of scalar fields, for $s\in\rrr$ and $1\leq p\leq\infty$, with norm $\|\cdot\|_{s,p,\Omega}$. For vector fields, we use the notation $\ww^{s,p}(\Omega)$ with the corresponding norm simply denoted by $\|\cdot\|_{s,p,\Omega}$. If $p=2$, the Sobolev space $W^{s,2}(\Omega)$ is the usual Hilbert space $H^s(\Omega)$~\cite{ref:mclean2000}. The same notation is used for the Hilbert space for vector fields $\hh^s(\Omega):= \ww^{s,2}(\Omega)$. We use the convention $W^{0,p}(\Omega) = L^p(\Omega)$ and $\ww^{0,p}(\Omega) = 
\ll^p(\Omega)$, for $1\leq p\leq\infty$. Additionally, the space $\lll^p(\Omega)$ is the space of all tensor fields with entries in $L^p(\Omega)$. The norm of $L^p(\Omega)$, $\ll^p(\Omega)$, and $\lll^p(\Omega)$ are all denoted in the same way, as $\|\cdot\|_{0,p,\Omega}$.

For differential operators, $\nabla$ denotes the usual gradient operator acting on either a scalar field or a vector field. Finally, we employ $\zero$ to denote the zero vector, tensor, or operator, depending on the context.

\subsection{Anisotropic Sobolev spaces}
The content of this section corresponds to an overview of the most important results (important for this study) obtained in~\cite{Amrouche1994,Rakosnik1979,Han2020,Clarkson1936}. Let $\Omega$ be an open subset of $\rrr^d$.
For $1 \leq p \leq \infty$, a function $v \in L^p(\Omega)$ satisfies
\begin{equation}\label{duality-for-Lp}
v \in W^{-1,p}(\Omega) \quad \text{and} \quad \nabla v \in \ww^{-1,p}(\Omega). 
\end{equation}
This is because $v$ can be identified with a distribution $T_v$, defined as
\begin{equation*}
|T_v(\varphi)| = \left| \int_\Omega v \varphi \right| \leq \|v\|_{0,p,\Omega} \|\varphi\|_{1,q,\Omega}
\end{equation*}
and
\begin{equation*}
|\partial_i T_v(\varphi)| = |-T_v(\partial_i \varphi)| = \left| \int_\Omega v \partial_i \varphi \right| \leq \|v\|_{0,p,\Omega} \|\varphi\|_{1,q,\Omega},\quad\forall\,1\leq i\leq d,
\end{equation*}
for each $\varphi \in \mathcal{D}(\Omega)$, where $q \in [1,+\infty]$ is such that $\frac{1}{p} + \frac{1}{q} = 1$.
Here $\partial_i(\cdot) = \frac{\partial(\cdot)}{\partial x_i}$, for $1\leq i\leq d$.

We can now formally define an anisotropic Sobolev space.
Let $1 \leq p,q \leq +\infty$. We introduce the \textit{anisotropic} Sobolev space $W^{1,p,q}(\Omega)$ as
\begin{equation}\label{anisotropic-sobolev-space}
W^{1,p,q}(\Omega) := \big\{v \in L^p(\Omega): \nabla v \in \ll^q(\Omega)\big\}.
\end{equation}
From~\cite[Section 3, Theorem 1]{Rakosnik1979} we know that if $p,q \in [1,+\infty)$ the space $W^{1,p,q}(\Omega)$ is a separable Banach space with respect to the norm
\begin{equation}\label{anisotropic-norm}
\|v\|_{1,p,q,\Omega} := \|v\|_{0,p,\Omega} + \|\nabla v\|_{0,q,\Omega},
\end{equation}
and in addition, it is reflexive if $p,q \in (1,+\infty)$.

Analogously to the vector Sobolev spaces $\ww^{1,p}(\Omega)$, we can define the vector anisotropic Sobolev space $\ww^{1,p,q}(\Omega)$, endowed with the norm as defined as in \eqref{anisotropic-norm} but applied to vector fields. The norm for vector fields in $\ww^{1,p,q}(\Omega)$ will be simply denoted as for the scalar case, that is $\|\cdot\|_{1,p,q,\Omega}$.

For $1 \leq q < d$, we let $q^* := \frac{dq}{d-q}$ denote the Sobolev critical index.
We have the following result from~\cite[Lemma 2.1]{Han2020}, which we have rewritten according to our notation.

\begin{theorem}\label{lem:anisotropic-injections}
Let $\Omega$ be an open and bounded domain with a compact Lipschitz boundary.
Assume $1 \leq p \leq \infty$.
When $1 \leq q < d$, then the embedding $\iota: W^{1,p,q}(\Omega) \rightarrow L^s(\Omega)$ is continuous if $1 \leq s \leq \max\{q^*,p\}$ and compact if $1 \leq s < \max\{q^*,p\}$.
When $q=d$, then the embedding $\iota: W^{1,p,q}(\Omega) \hookrightarrow L^s(\Omega)$ is compact if $1 \leq s < \infty$.
When $d < q \leq \infty$, then the embedding $\iota: W^{1,p,q}(\Omega) \rightarrow L^s(\Omega)$ is continuous if $1 \leq s \leq \infty$ and compact if $1 \leq s < \infty$.
\end{theorem}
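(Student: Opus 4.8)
The plan is to reduce every case to the classical Sobolev, Poincar\'e and Morrey inequalities, the crucial observation being that although a function $v\in W^{1,p,q}(\Omega)$ is only assumed to lie in $L^p(\Omega)$, its distributional gradient lies in $\ll^q(\Omega)$, and the gain of integrability in these inequalities is driven entirely by $\nabla v$. Since $\Omega$ is bounded we have $L^p(\Omega)\hookrightarrow L^1(\Omega)$ and $\ll^q(\Omega)\hookrightarrow\ll^1(\Omega)$, so in particular $v\in W^{1,1}(\Omega)$; passing to this $W^{1,1}$-setting is what will let me separate the mean of $v$ from its oscillation.

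First I would establish the continuous embeddings. For $1\le q<d$ I would invoke the Sobolev--Poincar\'e inequality in the form $\|v-\overline v\|_{0,q^*,\Omega}\le C\,\|\nabla v\|_{0,q,\Omega}$, where $\overline v$ denotes the mean of $v$ over $\Omega$; this requires only $v\in W^{1,1}(\Omega)$ and $\nabla v\in\ll^q(\Omega)$, exactly our hypothesis (one proves it from the pointwise representation $|v-\overline v|\le C\,I_1(|\nabla v|)$ valid for $W^{1,1}$ functions on Lipschitz domains, followed by the Hardy--Littlewood--Sobolev bound for the Riesz potential $I_1$). Estimating the mean by $|\overline v|\le C\,\|v\|_{0,1,\Omega}\le C\,\|v\|_{0,p,\Omega}$ then gives $v\in L^{q^*}(\Omega)\cap L^p(\Omega)$ with $\|v\|_{0,q^*,\Omega}+\|v\|_{0,p,\Omega}\le C\,\|v\|_{1,p,q,\Omega}$, whence $v\in L^s(\Omega)$ continuously for every $s\le\max\{q^*,p\}$ since $\Omega$ is bounded. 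For $d<q\le\infty$ I would replace Sobolev--Poincar\'e by the Morrey estimate $\|v-\overline v\|_{0,\infty,\Omega}\le C\,\|\nabla v\|_{0,q,\Omega}$ and argue identically to reach $L^\infty(\Omega)$, hence $L^s(\Omega)$ for every $s\le\infty$. The borderline case $q=d$ I would handle by reduction: for any finite $s$, pick $q_0<d$ with $q_0^*>s$ (possible since $q_0^*\to\infty$ as $q_0\to d^-$); as $\ll^d(\Omega)\hookrightarrow\ll^{q_0}(\Omega)$ on a bounded domain, $W^{1,p,d}(\Omega)\hookrightarrow W^{1,p,q_0}(\Omega)$ continuously, and the previous case gives $W^{1,p,q_0}(\Omega)\hookrightarrow L^s(\Omega)$.

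For the compactness assertions I would run a single interpolation argument uniformly across the three cases. Given a sequence bounded in $W^{1,p,q}(\Omega)$, it is bounded in $W^{1,1}(\Omega)$ by the inclusions above, so the Rellich--Kondrachov theorem ($W^{1,1}(\Omega)\hookrightarrow\hookrightarrow L^1(\Omega)$ on a bounded Lipschitz domain) furnishes a subsequence converging in $L^1(\Omega)$. By the continuous embedding just proved, this subsequence is also bounded in $L^r(\Omega)$, where $r=\max\{q^*,p\}$ if $q<d$, $r$ is any prescribed finite exponent if $q=d$, and $r=\infty$ if $q>d$. The interpolation inequality $\|w\|_{0,s,\Omega}\le\|w\|_{0,1,\Omega}^{\theta}\,\|w\|_{0,r,\Omega}^{1-\theta}$, with $\tfrac1s=\theta+\tfrac{1-\theta}{r}$ for some $\theta\in(0,1)$, which is possible precisely when $1\le s<r$, then upgrades $L^1$-convergence to $L^s$-convergence for every such $s$, yielding the claimed compact range in each case.

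The main obstacle is the continuous embedding into the sharp exponent when $p<q$: here $v$ is a priori only in $L^p(\Omega)$, so $v\notin W^{1,q}(\Omega)$ and one cannot simply quote the classical embedding theorem, while the weaker exponent $p^*$ coming from the inclusion $v\in W^{1,p}(\Omega)$ falls short of the target $q^*$. Resolving this is precisely the point of passing through $W^{1,1}(\Omega)$ and of using the mean--oscillation form of the Sobolev--Poincar\'e and Morrey inequalities, in which the full strength of $\nabla v\in\ll^q(\Omega)$ controls the oscillation $v-\overline v$ while the mild hypothesis $v\in L^p(\Omega)$ suffices to control only the scalar mean $\overline v$. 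Once this separation is in place, the remaining steps are standard.
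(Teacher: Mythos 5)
The paper offers no proof of this statement at all: it is imported, rewritten in the paper's notation, from \cite[Lemma 2.1]{Han2020}, so there is no internal argument to compare yours against --- your proposal supplies a proof where the paper delegates to the literature. Your architecture is the natural one and is essentially sound: since $\Omega$ is bounded, $W^{1,p,q}(\Omega)\hookrightarrow W^{1,1}(\Omega)$, and the mean--oscillation form of the Sobolev--Poincar\'e and Morrey inequalities lets $\nabla v\in L^q$ alone control $v-\overline v$ while $v\in L^p$ controls the scalar $\overline v$; this separation is exactly what circumvents the obstruction that $v\notin W^{1,q}(\Omega)$ when $p<q$, which you correctly identify as the crux. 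The reduction of $q=d$ to subcritical $q_0<d$, and the single compactness mechanism (Rellich--Kondrachov for $W^{1,1}(\Omega)\hookrightarrow\hookrightarrow L^1(\Omega)$ upgraded by the interpolation inequality $\|w\|_{0,s,\Omega}\le\|w\|_{0,1,\Omega}^{\theta}\|w\|_{0,r,\Omega}^{1-\theta}$ on the strict range $s<r$) are both correct and handle all three regimes, including the endpoints $p=\infty$ and $q=\infty$.

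One step fails as written. At $q=1$, which the statement includes since the first regime is $1\le q<d$, the Hardy--Littlewood--Sobolev theorem does \emph{not} bound the Riesz potential $I_1:L^1(\Omega)\to L^{d/(d-1)}(\Omega)$; $I_1$ is only of weak type $(1,d/(d-1))$, so your derivation of $\|v-\overline v\|_{0,d/(d-1),\Omega}\le C\|\nabla v\|_{0,1,\Omega}$ from the pointwise bound $|v-\overline v|\le C\,I_1(|\nabla v|)$ breaks down precisely at this endpoint, and it is needed both for the continuity claim and for the $L^r$-boundedness feeding your interpolation step. The repair is cheap: for $q=1$ you already have $\|v\|_{1,1,\Omega}\le C(\|v\|_{0,p,\Omega}+\|\nabla v\|_{0,1,\Omega})$, and the classical Gagliardo--Nirenberg--Sobolev embedding $W^{1,1}(\Omega)\hookrightarrow L^{d/(d-1)}(\Omega)$, valid on bounded Lipschitz domains (via a Stein extension, or via Maz'ya's truncation method if you wish to keep the mean--oscillation form on John domains), gives exactly the missing estimate; for $q>1$ your HLS route is fine. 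Two smaller caveats: the single-mean inequalities require $\Omega$ connected (a mean cannot control locally constant functions on a disconnected set) --- presumably ``domain'' carries connectedness, and otherwise the compact Lipschitz boundary forces finitely many components, so one argues componentwise; and in the degenerate case $d=1$ your $q=d$ reduction cannot pick $q_0<d$, though there the conclusion is immediate from $W^{1,1}$ of an interval embedding into $L^\infty$.
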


\section{Korn's inequality}\label{section:korns}
In this section we prove that Korn's inequality holds in anisotropic Sobolev spaces and show that an extension of it is also valid in these spaces.

\subsection{Extension to anisotropic Sobolev spaces}
The first result we present in this section extends the classical Korn inequality in \eqref{eq:1stkorn} to vector fields in $\ww^{1,p,q}(\Omega)$.
To that end we recall the following result from~\cite[Prop.~2.10(i)]{Amrouche1994}.
\begin{theorem}\label{thm:grad-regularity}
Let $\Omega$ be an open and bounded domain with Lipschitz boundary of $\rrr^d$. Aso let $m$ be any integer and $r$ be any real number with $1<r<\infty$.
If $p \in \dddd'(\Omega)$ is a distribution such that $\partial_i p \in W^{m-1,r}(\Omega)$, $1 \leq i \leq d$, then $p \in W^{m,r}(\Omega)$.
\end{theorem}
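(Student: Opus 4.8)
The plan is to isolate the one genuinely analytic ingredient, the base case $m=0$, and to obtain every other value of $m$ from it by elementary manipulations on the Sobolev scale together with a duality argument. The common engine is the surjectivity of the divergence on a bounded Lipschitz domain (equivalently, the Bogovskii right inverse of $\div$, which also underlies the classical Lions--Nečas negative-norm lemma); this is the only place where the Lipschitz regularity of $\partial\Omega$ is used, and it is the step I expect to be the main obstacle.

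\emph{Base case $m=0$.} Here I must show that $\nabla p\in\ww^{-1,r}(\Omega)$ forces $p\in L^r(\Omega)$. Let $r'$ be the conjugate exponent, $1/r+1/r'=1$. On a bounded Lipschitz domain the divergence maps $\ww^{1,r'}_0(\Omega)$ onto the mean-zero subspace $L^{r'}_0(\Omega)$ and admits a bounded right inverse: for $\varphi\in L^{r'}_0(\Omega)$ there is $\w\in\ww^{1,r'}_0(\Omega)$ with $\div\w=\varphi$ and $\|\w\|_{1,r',\Omega}\le C\|\varphi\|_{0,r',\Omega}$. Integrating by parts against such a $\w$ gives $\langle p,\varphi\rangle=\langle p,\div\w\rangle=-\langle\nabla p,\w\rangle$, so that $|\langle p,\varphi\rangle|\le\|\nabla p\|_{-1,r,\Omega}\,\|\w\|_{1,r',\Omega}\le C\|\nabla p\|_{-1,r,\Omega}\,\|\varphi\|_{0,r',\Omega}$. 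Correcting the mean of a general test function with a fixed cut-off of unit integral, this bound extends to all of $\dddd(\Omega)$, so $p$ defines a bounded linear functional on $L^{r'}(\Omega)$, i.e. $p\in L^r(\Omega)$.

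\emph{Positive case $m\ge1$.} For $m\ge0$ one has the obvious continuous embedding $W^{m-1,r}(\Omega)\hookrightarrow W^{-1,r}(\Omega)$, so the hypothesis $\partial_i p\in W^{m-1,r}(\Omega)$ already yields $\nabla p\in\ww^{-1,r}(\Omega)$, and the base case gives $p\in L^r(\Omega)$. It then suffices to use the elementary characterization $W^{m,r}(\Omega)=\{u\in L^r(\Omega):\partial_i u\in W^{m-1,r}(\Omega),\ 1\le i\le d\}$, which follows directly from the definition of the Sobolev norm by writing $\partial^\alpha u=\partial^{\alpha'}(\partial_i u)$ with $|\alpha'|=|\alpha|-1\le m-1$. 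Since $p\in L^r(\Omega)$ and $\partial_i p\in W^{m-1,r}(\Omega)$ by hypothesis, we conclude $p\in W^{m,r}(\Omega)$; no nested induction is needed.

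\emph{Negative case $m\le-1$.} Here one cannot route through $L^r(\Omega)$, and I would instead argue by transposition, identifying $W^{m,r}(\Omega)$ with the dual of $W^{-m,r'}_0(\Omega)$. Given a mean-zero $\varphi\in W^{-m,r'}_0(\Omega)$ one solves $\div\w=\varphi$ with $\w\in\ww^{1-m,r'}_0(\Omega)$ and $\|\w\|_{1-m,r',\Omega}\le C\|\varphi\|_{-m,r',\Omega}$ (the same right inverse, now at the higher smoothness $1-m\ge2$), and the pairing $\langle p,\varphi\rangle=-\langle\nabla p,\w\rangle$ estimates $p$ in $\big(W^{-m,r'}_0(\Omega)\big)'=W^{m,r}(\Omega)$. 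As noted, the crux throughout is the bounded right inverse of the divergence with the stated estimates on an arbitrary bounded Lipschitz domain; this is precisely the content supplied by the cited reference, and the reason the hypothesis on $\partial\Omega$ cannot be dropped.
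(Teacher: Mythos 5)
The paper itself offers no proof of this theorem: it is recalled verbatim from~\cite{Amrouche1994} (Prop.~2.10(i)), so there is no internal argument to compare against. Your proposal is essentially a correct reconstruction of the standard proof underlying that citation. The base case $m=0$ is exactly Necas' lemma, and deriving it from a bounded right inverse of the divergence on a bounded Lipschitz domain is the classical route; the case $m\ge 1$ follows, as you say, \emph{without} induction, from $W^{m-1,r}(\Omega)\subseteq L^r(\Omega)\subseteq W^{-1,r}(\Omega)$ together with the definition of the $W^{m,r}$-norm; and the case $m\le -1$ goes by transposition, using that $W^{m,r}(\Omega)$ is by definition the dual of $W^{-m,r'}_0(\Omega)$, granted the higher-order boundedness of the Bogovskii operator from $W^{-m,r'}_0(\Omega)\cap L^{r'}_0(\Omega)$ into $\ww^{1-m,r'}_0(\Omega)$ on bounded Lipschitz domains, which is indeed available in the literature (e.g.\ Galdi's book, Thm.~III.3.3, or Geissert, Heck and Hieber). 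So the architecture is sound and matches what the cited reference actually does.

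One step should be tightened. The identity $\langle p,\varphi\rangle=\langle p,\div\w\rangle=-\langle\nabla p,\w\rangle$ is not available for a general $\w\in\ww^{1,r'}_0(\Omega)$: a priori $p$ is only a distribution, so $\langle p,\div\w\rangle$ is undefined unless $\div\w$ is a test function, and you cannot reach it by density either, since continuity of $p$ on $L^{r'}(\Omega)$ is precisely what is being proved. The standard repair is to use that Bogovskii's explicit integral operator preserves smoothness and compact support: for mean-zero $\varphi\in\dddd(\Omega)$ one gets $\w\in\dddd(\Omega)^d$ with $\div\w=\varphi$ and the stated Sobolev bound, so the pairing identity is definitional; the estimate then extends to all of $L^{r'}(\Omega)$ by your mean-value correction, and, in the case $m\le-1$, one should likewise establish the bound $|\langle p,\varphi\rangle|\le C\|\varphi\|_{-m,r',\Omega}$ only for $\varphi\in\dddd(\Omega)$ (with the same mean correction, which you omit there) and conclude by density of $\dddd(\Omega)$ in $W^{-m,r'}_0(\Omega)$; this also sidesteps any well-definedness question arising from the non-uniqueness of $\w$. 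With these routine insertions your proof is complete.
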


\begin{theorem}[Korn's inequality in $\ww^{1,p,q}(\Omega)$]\label{thm:korn-anisotropic}
Let $\Omega$ be a bounded domain of $\rrr^d$ with a compact and Lipschitz continuous boundary. 
Let $1\leq p,q \leq \infty$, $q \neq 1$.
Then, there exists a constant $C = C(\Omega,p,q)$ such that for all $\v \in \ww^{1,p,q}(\Omega)$
\begin{equation}\label{eq:1stkorn-anisotropic}
\|\v\|_{1,p,q,\Omega} \leq C \left\{ \|\v\|_{0,p,\Omega} + \|\bepsilon(\v)\|_{0,q,\Omega} \right\}
\end{equation}
\end{theorem}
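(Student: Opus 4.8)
The plan is to prove the equivalent statement that the full gradient is controlled by its symmetric part up to the stated lower-order term, namely that $\|\nabla\v\|_{0,q,\Omega}\le C\big(\|\v\|_{0,p,\Omega}+\|\bepsilon(\v)\|_{0,q,\Omega}\big)$; adding $\|\v\|_{0,p,\Omega}$ to both sides and recalling the definition \eqref{anisotropic-norm} then yields \eqref{eq:1stkorn-anisotropic}. The starting point is the classical algebraic Korn identity, valid in $\dddd'(\Omega)$ for every $\v\in\ww^{1,p,q}(\Omega)$,
\begin{equation*}
\partial_k\partial_l v_i=\partial_l\epsilon_{ik}(\v)+\partial_k\epsilon_{il}(\v)-\partial_i\epsilon_{kl}(\v),\qquad 1\le i,k,l\le d,
\end{equation*}
obtained by expanding the right-hand side and using that mixed distributional derivatives commute. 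Since each component $\epsilon_{ik}(\v)$ lies in $L^q(\Omega)$, the duality bound recorded in \eqref{duality-for-Lp} gives $\|\partial_m\epsilon_{ik}(\v)\|_{-1,q,\Omega}\le C\|\bepsilon(\v)\|_{0,q,\Omega}$, so the identity shows that every second derivative $\partial_k\partial_l v_i$ belongs to $W^{-1,q}(\Omega)$ with $\|\partial_k\partial_l v_i\|_{-1,q,\Omega}\le C\|\bepsilon(\v)\|_{0,q,\Omega}$.

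Next I would apply \autoref{thm:grad-regularity} with $m=0$ and $r=q$ --- which is where the hypothesis $q\neq1$ together with $q<\infty$ is used --- to each first derivative $\partial_l v_i$, regarded as a distribution all of whose derivatives $\partial_k(\partial_l v_i)$ lie in $W^{-1,q}(\Omega)$. This produces the quantitative Ne\v{c}as-type estimate
\begin{equation*}
\|\partial_l v_i\|_{0,q,\Omega}\le C\Big(\|\partial_l v_i\|_{-1,q,\Omega}+\sum_{k}\|\partial_k\partial_l v_i\|_{-1,q,\Omega}\Big),
\end{equation*}
and summing over $i,l$ and inserting the bound from the previous step gives the core inequality $\|\nabla\v\|_{0,q,\Omega}\le C\big(\|\nabla\v\|_{-1,q,\Omega}+\|\bepsilon(\v)\|_{0,q,\Omega}\big)$.

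It remains to absorb the lower-order term $\|\nabla\v\|_{-1,q,\Omega}$ into $\|\v\|_{0,p,\Omega}$. When $p\ge q$ this is direct: for $\varphi\in\dddd(\Omega)$ one writes $\langle\partial_l v_i,\varphi\rangle=-\int_\Omega v_i\,\partial_l\varphi$, bounds it by $\|v_i\|_{0,p,\Omega}\|\partial_l\varphi\|_{0,p',\Omega}$, and uses that $\Omega$ is bounded so that $\|\varphi\|_{1,p',\Omega}\le C\|\varphi\|_{1,q',\Omega}$ (because $p\ge q$ forces $p'\le q'$), giving $\|\nabla\v\|_{-1,q,\Omega}\le C\|\v\|_{0,p,\Omega}$ and hence the theorem. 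When $p<q$ this estimate is no longer available, and the term must be handled by a compactness-and-contradiction argument: assuming the inequality fails produces a sequence $\v_n$ with $\|\v_n\|_{1,p,q,\Omega}=1$ and $\|\v_n\|_{0,p,\Omega}+\|\bepsilon(\v_n)\|_{0,q,\Omega}\to0$; reflexivity of $\ww^{1,p,q}(\Omega)$ for $p,q\in(1,\infty)$ yields a weakly convergent subsequence, the compact embedding of \autoref{lem:anisotropic-injections} into some $L^s(\Omega)$ with $s\ge q$ (available since $q<\max\{q^*,p\}$ when $q<d$, and into every $L^s$, $s<\infty$, otherwise) upgrades this to strong convergence in $L^s(\Omega)$, and feeding this into the core inequality applied to the differences $\v_n-\v_m$ --- whose residual term $\|\nabla(\v_n-\v_m)\|_{-1,q,\Omega}$ is now controlled by $\|\v_n-\v_m\|_{0,s,\Omega}$ --- shows $\{\v_n\}$ is Cauchy in $\ww^{1,p,q}(\Omega)$; its limit then has vanishing $L^p$-norm yet unit full norm, a contradiction.

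The main obstacle is conceptual rather than computational: the skew-symmetric (rotational) part of $\nabla\v$ is invisible to $\bepsilon(\v)$ and cannot be bounded by it pointwise, so the control is genuinely nonlocal and must be extracted through the identity-plus-regularity mechanism above. The delicate points in executing the plan are the sharp matching of exponents in the negative-norm duality when $p<q$, which is precisely what forces the use of the compact embedding in \autoref{lem:anisotropic-injections}, and the endpoint cases $q=\infty$ (where \autoref{thm:grad-regularity} does not apply and a separate limiting argument in $q$ with a uniform constant is required) and $p\in\{1,\infty\}$ (where $\ww^{1,p,q}(\Omega)$ is not reflexive and the weak-compactness step must be replaced), which I would treat last.
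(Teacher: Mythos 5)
Your skeleton coincides with the paper's up to a point --- the same algebraic identity $\partial_k\partial_l v_i=\partial_l e_{ik}(\v)+\partial_k e_{il}(\v)-\partial_i e_{kl}(\v)$, the same duality observation \eqref{duality-for-Lp}, and the same appeal to \autoref{thm:grad-regularity} with $m=0$, $r=q$ --- but your pivotal step is a genuine gap: \autoref{thm:grad-regularity} is purely qualitative. It asserts only the membership $\partial_l v_i\in L^q(\Omega)$ and carries no norm bound, so the ``quantitative Necas-type estimate'' you claim it ``produces'' is not available from it; that estimate must either be cited separately or be derived from the qualitative statement by a closed-graph/open-mapping argument, and this functional-analytic step is precisely the missing heart of the proof. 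The paper supplies it in a way that makes all of your subsequent bookkeeping unnecessary: it defines $\ee^{p,q}(\Omega):=\{\v\in\ll^p(\Omega):\bepsilon(\v)\in\lll^q_s(\Omega)\}$ with the norm $\|\v\|:=\|\v\|_{0,p,\Omega}+\|\bepsilon(\v)\|_{0,q,\Omega}$, proves this space is complete, establishes the set equality $\ee^{p,q}(\Omega)=\ww^{1,p,q}(\Omega)$ using exactly your identity plus \autoref{thm:grad-regularity} (membership suffices), and then applies the bounded inverse theorem to the identity map to get \eqref{eq:1stkorn-anisotropic} in one stroke. On that route there is no term $\|\nabla\v\|_{-1,q,\Omega}$ to absorb, hence no case split $p\ge q$ versus $p<q$, and no compactness whatsoever --- the remark following the theorem stresses this, whereas your $p<q$ branch imports the compact embedding of \autoref{lem:anisotropic-injections} (available here for $q<\infty$, so not wrong, but foreign to the statement and contingent on the unproved core estimate). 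Incidentally, in that branch reflexivity is never needed: compactness of $\iota$ alone extracts a strongly $L^s$-convergent subsequence, so your deferred worry about $p\in\{1,\infty\}$ is self-inflicted.

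The deferred endpoint $q=\infty$ is a second, unfixable problem for your plan: no ``limiting argument in $q$ with a uniform constant'' can exist. On a planar bounded domain containing the origin, the fields $\u_\delta(\x)=\log(\|\x\|+\delta)\,(-x_2,x_1)^{\transpose}$ lie in $\ww^{1,\infty}(\Omega)$ with $\|\u_\delta\|_{0,\infty,\Omega}$ and $\|\bepsilon(\u_\delta)\|_{0,\infty,\Omega}$ bounded uniformly in $\delta$ (every strain entry is bounded by $r\,\phi'(r)=r/(r+\delta)\le 1$, since the strain of an amplitude-modulated rotation field sees only $r\phi'(r)$), while $\|\nabla\u_\delta\|_{0,\infty,\Omega}\ge |\log\delta|-1\to\infty$ as $\delta\to 0$; so \eqref{eq:1stkorn-anisotropic} fails outright at $q=\infty$, and correspondingly $\ee^{p,\infty}(\Omega)\neq\ww^{1,p,\infty}(\Omega)$. (The restriction $1<r<\infty$ in \autoref{thm:grad-regularity} means the paper's own argument likewise only covers $q<\infty$, despite the statement's range.) In short: replace the asserted quantitative estimate by the Banach-space/bounded-inverse mechanism --- at which point you will have reproduced the paper's proof and can delete both the exponent case analysis and the compactness detour --- and treat $q=\infty$ as excluded rather than as a recoverable endpoint.
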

\begin{proof}
The proof is a generalization of the one of Theorem 2.1 of~\cite{Ciarlet2010}, which in turn follows the one of Theorem 3.3 in Chapter 3 of~\cite{Duvaut1972}.

Let us define the space
\begin{equation}
\ee^{p,q}(\Omega) := \{ \v \in \ll^p(\Omega): \bepsilon(\v) \in \lll_s^q(\Omega)\},
\end{equation}
where $\lll^q_s(\Omega)$ is the space of symmetric tensor fields in $\lll^q(\Omega)$. Then, $\ee^{p,q}(\Omega)$ endowed with the norm $\|\v\| := \|\v\|_{0,p,\Omega} + \|\bepsilon(\v)\|_{0,q,\Omega}$ is a Banach space. Indeed, the condition $\bepsilon(\v) \in \lll_s^q(\Omega)$ defining $\ee^{p,q}(\Omega)$ is understood in the sense of distributions, that is, it means that there exist symmetrical functions $e_{ij}(\v)$ in $L^q(\Omega)$, such that
\begin{equation*}
\int_\Omega e_{ij}(\v) \varphi = -\frac{1}{2} \int_\Omega \left( v_i \partial_j \varphi + v_j \partial_i \varphi \right), \quad \forall \varphi \in \mathcal{D}(\Omega).
\end{equation*}
Now, let $\{\v^k\}_{k\in\nnn}$ be a Cauchy sequence with respect to $\|\cdot\|$ of elements $\v^k = (v^k_1,\ldots,v^k_d) \in \ee^{p,q}(\Omega)$.
The definition of $\|\cdot\|$ implies that for each $i,j=1,\dots,d$, the sequences $\{v^k_i\}_{k\in\nnn}$ and $\{e_{ij}(\v^k)\}_{k\in\nnn}$ are Cauchy in $L^p(\Omega)$ and $L^q(\Omega)$, respectively. Being these two spaces complete, there exist functions $v_i \in L^p(\Omega)$ and $e_{ij} \in L^q(\Omega)$ such that
\begin{equation*}
\|v^k_i - v_i\|_{0,p,\Omega} \xrightarrow{k\to\infty} 0 \quad \text{and} \quad \|e_{ij}(\v^k) - e_{ij}\|_{0,q,\Omega}\xrightarrow{k\to\infty} 0.
\end{equation*}
Moreover, since $e_{ij}(\v^k) = e_{ji}(\v^k)$ for each natural $k$, it is clear that $e_{ij} = e_{ji}$.

Given a function $\varphi \in \mathcal{D}(\Omega)$, we let $k \to \infty$ in the relations
\begin{equation*}
\int_\Omega e_{ij}(\v^k) \varphi = -\frac{1}{2} \int_\Omega \left( v^k_i \partial_j \varphi + v^k_j \partial_i \varphi \right), \quad \forall \varphi \in \mathcal{D}(\Omega).
\end{equation*}
to deduce $e_{ij} = e_{ij}(\v)$.
Thus $\v \in \ee^{p,q}(\Omega)$, and $(\v^k)_{k\in\nnn}$ converges to $\v$ in $\|\cdot\|$.

Furthermore, $\ee^{p,q}(\Omega)$ and $\ww^{1,p,q}(\Omega)$ coincide.
Indeed, since $\|\bepsilon(\cdot)\|_{0,q,\Omega}\leq c\|\nabla(\cdot)\|_{0,q,\Omega}$ for some positive constant $c$, we clearly have that $\ww^{1,p,q}(\Omega) \subseteq \ee^{p,q}(\Omega)$.
For the reverse inclusion, consider $\v = (v_1,\ldots,v_d) \in \ee^{p,q}(\Omega)$.
Then for $1 \leq i,j,k \leq d$, duality arguments \eqref{duality-for-Lp} give
\begin{equation*}
\partial_k v_i \in W^{-1,p}(\Omega) \subseteq \dddd'(\Omega), \quad \partial_j(\partial_k v_i) = \{ \partial_j e_{ik}(\v) + \partial_k \e_{ij}(\v) - \partial_i e_{jk}(\v) \} \in W^{-1,q}(\Omega).
\end{equation*}
Hence, by \autoref{thm:grad-regularity} with $m=0$ we know that $\partial_k v_i \in L^q(\Omega)$, and therefore $\v \in \ww^{1,p,q}(\Omega)$.

The identity mapping $\iota$ from $\ww^{1,p,q}(\Omega)$ equipped with $\|\cdot\|_{1,p,q,\Omega}$ into $\ee^{p,q}(\Omega)$ equipped with $\|\cdot\|$ is bounded (clearly $\|\cdot\| \leq c \|\cdot\|_{1,p,q,\Omega}$, for some positive constant $c$ and surjective as $\ee^{p,q}(\Omega) = \ww^{1,p,q}(\Omega)$.
In this way, the bounded inverse Theorem~\cite[Corollary 2.7]{Brezis2011} then shows that the inverse mapping $\iota^{-1}$ is also bounded, which is exactly what we were aiming to prove.
\end{proof}
\begin{remark}
We stress the fact that the previous result, unlike the forthcoming \autoref{thm:korn-quotientspace} and \autoref{thm:nonlinearkorn}, holds regardless of the embedding $\iota: W^{1,p,q}(\Omega) \hookrightarrow L^q(\Omega)$ being compact or not.
\end{remark}

Now, let $\aaa^d$ denote the space of all real $d \times d$ skew-symmetric matrices. We define the space of {\it rigid motions} of $\Omega$, denoted by $\rr\mm(\Omega)$, as follows
\begin{equation}
    \rr\mm(\Omega) := \big\{ \v \in \ww^{1,p,q}(\Omega): \v(\x) = \aa \x + \c \quad \text{a.e. $\x\in\Omega$},\quad \aa \in \aaa^d,\,\, \c \in \rrr^d \big\}.
\end{equation}
The following result characterizes the kernel of the strain stress tensor in $\ww^{1,p,q}(\Omega)$.
\begin{theorem}\label{thm:kernel-symgrad}
Let $\Omega$ be an open, and connected domain in $\rrr^d$. Then, the kernel of the strain stress tensor, $\ker(\bepsilon) := \{\v \in \ww^{1,p,q}(\Omega):\,\bepsilon(\v) = \mathbf{0}\,\,\text{in}\,\,\Omega\}$, coincides with the space of rigid motions, that is
\begin{equation}
\ker(\bepsilon)=\rr\mm(\Omega).
\end{equation}
In addition, the kernel of the strain stress tensor is finite-dimensional, with dimension $\frac{d(d+1)}{2}$.
\end{theorem}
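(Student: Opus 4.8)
The plan is to reduce the whole statement to the classical distributional identity that expresses the second derivatives of $\v$ through first derivatives of its strain components, combined with the elementary fact that a distribution on a connected open set whose first-order derivatives all vanish must be constant. First I would record, for $\v=(v_1,\dots,v_d)$ with strain components $e_{ij}(\v)=\tfrac12(\partial_i v_j+\partial_j v_i)\in L^q(\Omega)$, the identity
\begin{equation*}
\partial_j\partial_k v_i=\partial_j e_{ik}(\v)+\partial_k e_{ij}(\v)-\partial_i e_{jk}(\v),\qquad 1\le i,j,k\le d,
\end{equation*}
understood in $\dddd'(\Omega)$. This is exactly the relation already exploited in the proof of \autoref{thm:korn-anisotropic}, and it is legitimate at the distributional level given only $\v\in\ll^p(\Omega)$ and $\bepsilon(\v)\in\lll^q_s(\Omega)$, by the same duality manipulations used there.

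Next I would prove the inclusion $\ker(\bepsilon)\subseteq\rr\mm(\Omega)$. If $\v\in\ker(\bepsilon)$ then every $e_{ij}(\v)$ vanishes, so the right-hand side above is zero and hence $\partial_j(\partial_k v_i)=0$ in $\dddd'(\Omega)$ for all $i,j,k$. For fixed $i,k$, the function $\partial_k v_i\in L^q(\Omega)$ then has all its first distributional derivatives equal to zero; since $\Omega$ is connected, this forces $\partial_k v_i$ to be a.e.\ equal to a constant $b_{ik}$. Applying the same principle once more to $v_i-\sum_k b_{ik}x_k$ gives $v_i(\x)=\sum_k b_{ik}x_k+c_i$ a.e., that is $\v(\x)=\bb\x+\c$ with $\bb=(b_{ik})$ and $\c=(c_i)$. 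Imposing $\bepsilon(\v)=\zero$ yields $b_{ik}+b_{ki}=0$, so $\bb=:\aa$ is skew-symmetric and $\v\in\rr\mm(\Omega)$. The reverse inclusion $\rr\mm(\Omega)\subseteq\ker(\bepsilon)$ is immediate: for $\v(\x)=\aa\x+\c$ with $\aa\in\aaa^d$ one computes $\partial_j v_i=a_{ij}$, whence $e_{ij}(\v)=\tfrac12(a_{ij}+a_{ji})=0$.

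For the dimension, I would parametrize $\rr\mm(\Omega)$ by the linear map $\Phi:\aaa^d\times\rrr^d\to\ww^{1,p,q}(\Omega)$, $(\aa,\c)\mapsto[\x\mapsto\aa\x+\c]$, and check it is injective. Since $\Omega$ is open and nonempty it contains $d+1$ affinely independent points; evaluating $\aa\x+\c=\zero$ at these points and subtracting forces $\aa=\zero$ and then $\c=\zero$. Hence $\Phi$ is a linear isomorphism onto $\rr\mm(\Omega)$, and
\begin{equation*}
\dim\rr\mm(\Omega)=\dim\aaa^d+\dim\rrr^d=\frac{d(d-1)}{2}+d=\frac{d(d+1)}{2}.
\end{equation*}

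I expect the main obstacle to be the rigorous justification that the vanishing of all second distributional derivatives forces $\v$ to be affine. This rests entirely on the statement that a distribution (here an $L^q$ function) on a connected open set with all first derivatives zero is constant; it is standard but should be stated cleanly and cited, and connectedness of $\Omega$ is essential here. Everything else is a routine computation once this lemma is in place.
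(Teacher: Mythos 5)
Your proposal is correct and follows essentially the same route as the paper's proof: the same distributional identity expressing $\partial_j\partial_k v_i$ through first derivatives of the strain components, followed by two applications of the key lemma that a distribution on a connected open set whose first derivatives all vanish is (a.e.) constant, which is exactly what the paper invokes via H\"ormander's Corollary 3.1.6 together with a result of Fitzpatrick. Your explicit injectivity check for the parametrization $(\aa,\c)\mapsto\aa\x+\c$, yielding the dimension $\frac{d(d-1)}{2}+d=\frac{d(d+1)}{2}$, is a routine detail the paper leaves implicit.
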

\begin{proof}
The proof is a generalization of the one of~\cite[Theorem 2.2]{Ciarlet2010}.
Note that the inclusion $\rr\mm(\Omega)\subseteq\ker(\bepsilon)$ is straightforward.

Observe that for any $\v \in \ww^{1,p,q}(\Omega)$ and $i,j,k=1,\dotsc,d$, it holds
\begin{equation*}
\int_\Omega (\partial_j v_i) \partial_k \varphi
= \int_\Omega \left( e_{ij}(\v) \partial_k \varphi + e_{ik}(\v)\partial_j \varphi - e_{jk}(\v)\partial_i \varphi \right) \quad \forall \varphi \in \mathcal{D}(\Omega),
\end{equation*}
because both sides of the equality coincide with $\displaystyle-\int_\Omega v_i \partial_{kj} \varphi$.
Therefore, if $\v \in \ww^{1,p,q}(\Omega)$ is such that $\bepsilon(\v) = \zero$, then
\begin{equation*}
\partial_k T_{\partial_j v_i}(\varphi) = - \int_\Omega (\partial_j v_i)\partial_k\varphi  = 0, \quad \forall \varphi \in \mathcal{D}(\Omega), \quad \forall j,k = 1,\dotsc,d.
\end{equation*}
Thus, given that $\Omega$ is connected, we can use~\cite[Corollary 3.1.6]{Hormander2003} in combination with~\cite[Theorem 13.17]{Fitzpatrick2009} to deduce the existence of constants $a_{ij}$, $i,j=1,\dotsc,d$, such that $\partial_j v_i = a_{ij}$ a.e.~in $\Omega$.
Moreover, the condition $e_{ij}(\v) = 0$ implies that $a_{ij} = -a_{ji}$.

Let $w_i(\x) := \sum_{j=1}^d a_{ij} x_j$ for $\x \in \Omega$, $i=1,\dotsc,d$.
Then for each $\varphi \in \mathcal{D}(\Omega)$
\begin{equation*}
\int_\Omega v_i \partial_j \varphi = - \int_\Omega (\partial_j v_i)\varphi = - a_{ij} \int_\Omega \varphi = -\int_\Omega (\partial_j w_i) \varphi = \int_\Omega w_i \partial_j \varphi,
\end{equation*}
that is $\partial_j T_{v_i-w_i} = 0$ for each $j=1,\dotsc,d$.
In this way, using~\cite[Corollary 3.1.6]{Hormander2003} and~\cite[Theorem 13.17]{Fitzpatrick2009} there exist constants $c_i$ such that $v_i(\x) = w_i(\x) + c_i$ a.e.~in $\Omega$.
Thus, we have shown that $\v(\x) = \aa \x + \c$ a.e.~in $\Omega$ with $\aa := (a_{ij})$ and $\c := (c_i)$.
\end{proof}

\begin{theorem}[Korn's inequality in the quotient space $\ww^{1,p,q}(\Omega)/\ker(\bepsilon)$]\label{thm:korn-quotientspace}
Let $\Omega$ be a bounded domain of $\rrr^d$ with compact, Lipschitz boundary such that $\ker(\bepsilon)$ is finite-dimensional.
Define the quotient space
\begin{equation}
\dot \ww^{1,p,q}(\Omega) := \ww^{1,p,q}(\Omega)/\ker(\bepsilon),
\end{equation}
equipped with the norm
\begin{equation*}
\|\dot \v\|_{1,p,q,\Omega} := \inf_{\r \in \ker(\bepsilon)} \|\v - \r\|_{1,p,q,\Omega} \quad \text{for all} \quad \dot \v \in \dot \ww^{1,p,q}(\Omega),
\end{equation*}
the space $\dot \ww^{1,p,q}(\Omega)$ is a Banach space.
Then:
\begin{enumerate}
\item\label{it:quotientspace} If $1 \leq p,q\leq \infty$, $q \neq 1$, are such that the embedding $\iota: W^{1,p,q}(\Omega) \hookrightarrow L^p(\Omega)$ is compact (see Lemma \ref{lem:anisotropic-injections}), then there exists a constant $\dot C := \dot C(\Omega,p,q)$ such that Korn's inequality in $\dot \ww^{1,p,q}(\Omega)$ holds, that is,
\begin{equation}\label{eq:2ndkorn-anisotropic}
\|\dot \v\|_{1,p,q,\Omega} \leq \dot C \|\bepsilon(\dot \v)\|_{0,q,\Omega} \quad \forall \dot \v \in \dot \ww^{1,p,q}(\Omega),
\end{equation}
where $\bepsilon(\dot \v) := \bepsilon(\w)$ for any $\w \in \dot \v$.

\item\label{it:quotient-to-normal} Conversely, Korn's inequality in $\dot \ww^{1,p,q}(\Omega)$ implies Korn's inequality in $\ww^{1,p,q}(\Omega)$ (see \autoref{thm:korn-anisotropic}).
\end{enumerate}
\end{theorem}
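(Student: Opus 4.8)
The plan is to establish part~\ref{it:quotientspace} by a compactness argument of Peetre--Tartar type, combining the first Korn inequality of \autoref{thm:korn-anisotropic} with the compact embedding $\iota:W^{1,p,q}(\Omega)\hookrightarrow L^p(\Omega)$ of \autoref{lem:anisotropic-injections}, and to obtain the converse part~\ref{it:quotient-to-normal} by a direct triangle-inequality estimate that exploits the finite-dimensionality of $\ker(\bepsilon)$. As a preliminary I would record that, by hypothesis (and since $\ker(\bepsilon)=\rr\mm(\Omega)$ by \autoref{thm:kernel-symgrad}), the kernel is a finite-dimensional, hence closed, subspace of the Banach space $\ww^{1,p,q}(\Omega)$; consequently the quotient $\dot\ww^{1,p,q}(\Omega)$ with the quotient norm is complete, and the assignment $\bepsilon(\dot\v):=\bepsilon(\w)$ for any representative $\w\in\dot\v$ is well defined (it is independent of the representative because $\bepsilon$ annihilates $\ker(\bepsilon)$) and bounded into $\lll^q_s(\Omega)$.

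For part~\ref{it:quotientspace} I would argue by contradiction. Negating \eqref{eq:2ndkorn-anisotropic} yields a sequence $\{\dot\v^k\}\subset\dot\ww^{1,p,q}(\Omega)$ with $\|\dot\v^k\|_{1,p,q,\Omega}=1$ and $\|\bepsilon(\dot\v^k)\|_{0,q,\Omega}\to0$. By definition of the quotient norm I would pick representatives $\v^k\in\dot\v^k$ with $1\le\|\v^k\|_{1,p,q,\Omega}\le 1+\tfrac1k$, so $\{\v^k\}$ is bounded in $\ww^{1,p,q}(\Omega)$ and $\|\bepsilon(\v^k)\|_{0,q,\Omega}\to0$. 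The compact embedding then furnishes a subsequence (not relabeled) with $\v^k\to\v$ in $\ll^p(\Omega)$, so $\{\v^k\}$ is Cauchy in $\ll^p(\Omega)$. Applying \autoref{thm:korn-anisotropic} to the differences gives $\|\v^k-\v^m\|_{1,p,q,\Omega}\le C(\|\v^k-\v^m\|_{0,p,\Omega}+\|\bepsilon(\v^k)\|_{0,q,\Omega}+\|\bepsilon(\v^m)\|_{0,q,\Omega})$, whose right-hand side tends to $0$; hence $\{\v^k\}$ is Cauchy in $\ww^{1,p,q}(\Omega)$ and converges there to $\v$. Continuity of $\bepsilon$ forces $\bepsilon(\v)=\zero$, i.e. $\v\in\ker(\bepsilon)$, whence $\|\dot\v\|_{1,p,q,\Omega}=0$. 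Since the canonical projection is a contraction, $\v^k\to\v$ in $\ww^{1,p,q}(\Omega)$ gives $\|\dot\v^k\|_{1,p,q,\Omega}\to\|\dot\v\|_{1,p,q,\Omega}=0$, contradicting $\|\dot\v^k\|_{1,p,q,\Omega}=1$.

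For part~\ref{it:quotient-to-normal}, assuming \eqref{eq:2ndkorn-anisotropic}, I would fix $\v\in\ww^{1,p,q}(\Omega)$ and, for $\varepsilon>0$, choose $\r\in\ker(\bepsilon)$ with $\|\v-\r\|_{1,p,q,\Omega}\le\|\dot\v\|_{1,p,q,\Omega}+\varepsilon\le\dot C\|\bepsilon(\v)\|_{0,q,\Omega}+\varepsilon$. Because $\ker(\bepsilon)$ is finite-dimensional and $\|\cdot\|_{0,p,\Omega}$ is a genuine norm on $\rr\mm(\Omega)$, the norms $\|\cdot\|_{1,p,q,\Omega}$ and $\|\cdot\|_{0,p,\Omega}$ are equivalent there, say $\|\r\|_{1,p,q,\Omega}\le C'\|\r\|_{0,p,\Omega}$. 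Combining this with $\|\r\|_{0,p,\Omega}\le\|\v-\r\|_{1,p,q,\Omega}+\|\v\|_{0,p,\Omega}$ and the triangle inequality $\|\v\|_{1,p,q,\Omega}\le\|\v-\r\|_{1,p,q,\Omega}+\|\r\|_{1,p,q,\Omega}$, then letting $\varepsilon\to0$, yields $\|\v\|_{1,p,q,\Omega}\le C(\|\v\|_{0,p,\Omega}+\|\bepsilon(\v)\|_{0,q,\Omega})$, which is precisely \eqref{eq:1stkorn-anisotropic}.

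The main obstacle I anticipate lies in the compactness step of part~\ref{it:quotientspace}. Because the admissible range includes $q=\infty$ and $p\in\{1,\infty\}$, the space $\ww^{1,p,q}(\Omega)$ need not be reflexive, so weak-compactness extractions are unavailable; instead the argument hinges on obtaining \emph{strong} $\ll^p$-convergence from \autoref{lem:anisotropic-injections} and then upgrading it to strong $\ww^{1,p,q}$-convergence by feeding the $\ll^p$-Cauchy property back through the first Korn inequality. This is exactly where the hypotheses $q\neq1$ (required for \autoref{thm:korn-anisotropic}) and the compactness of $\iota$ are indispensable. A secondary point demanding care is the representative-independence of $\bepsilon(\dot\v)$ and the consistent passage between quotient-norm and representative estimates.
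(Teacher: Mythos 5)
Your proposal is correct, and while it rests on the same analytic engine as the paper (upgrading strong $\ll^p$ convergence to strong $\ww^{1,p,q}$ convergence by feeding the Cauchy property through \autoref{thm:korn-anisotropic}), its structure differs genuinely on both parts. For part 1, the paper does not argue directly on the quotient: it first uses Hahn--Banach to build functionals $\ell_1,\dotsc,\ell_M$ dual to a basis of $\ker(\bepsilon)$ and proves the intermediate estimate \eqref{bounding-by-HB}, namely $\|\v\|_{1,p,q,\Omega} \leq D \big( \|\bepsilon(\v)\|_{0,q,\Omega} + \sum_{\alpha=1}^M |\ell_\alpha(\v)| \big)$, by the very contradiction/compactness argument you employ, and only then deduces \eqref{eq:2ndkorn-anisotropic} by picking $\r(\v)\in\ker(\bepsilon)$ solving the $M\times M$ system $\ell_\alpha(\v+\r(\v))=0$. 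Your version runs the same contradiction directly at the quotient level with near-infimal representatives $1\leq\|\v^k\|_{1,p,q,\Omega}\leq 1+\tfrac1k$, eliminating the Hahn--Banach scaffolding and the linear-system step; what the paper's detour buys is the stand-alone inequality \eqref{bounding-by-HB}, a Korn inequality with linear side conditions of independent interest and in the spirit of \autoref{thm:nonlinearkorn}. For part 2 the divergence is sharper: the paper argues by contradiction, using finite-dimensionality of $\ker(\bepsilon)$ as \emph{compactness} to extract a convergent subsequence from near-minimizing rigid displacements $\r^k$ and forcing the limit to vanish, whereas you use finite-dimensionality as \emph{norm equivalence}, $\|\r\|_{1,p,q,\Omega}\leq C'\|\r\|_{0,p,\Omega}$ on $\ker(\bepsilon)$ (legitimate, since $\|\cdot\|_{0,p,\Omega}$ is a genuine norm on all of $\ww^{1,p,q}(\Omega)$: $\v=\zero$ a.e.\ forces $\nabla\v=\zero$ distributionally), obtaining \eqref{eq:1stkorn-anisotropic} directly with an explicit constant, essentially $C=\max\{C',(1+C')\dot C\}$, and no sequences at all. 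Both routes correctly localize the hypotheses exactly as the paper does ($q\neq 1$ enters only through \autoref{thm:korn-anisotropic}; compactness of $\iota$ only in part 1); the one point worth making explicit in your part 1 is completeness of $\ww^{1,p,q}(\Omega)$ at the endpoint values $p=\infty$ or $q=\infty$, since the cited result of R\'akosn\'ik is stated for $p,q\in[1,\infty)$ — a standard fact, and one on which the paper's own Cauchy-sequence step relies equally tacitly.
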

\begin{proof}
\begin{enumerate}
\item
Let $M$ be the dimension of $\ker(\bepsilon)$. 
A corollary of Hahn-Banach theorem in normed vector spaces~\cite{Brezis2011} ensures the existence of $M$ linear and bounded functionals $\ell_\alpha$ on $\ww^{1,p,q}(\Omega)$, $1 \leq \alpha \leq M$, such that: given an element $\r \in \ker(\bepsilon)$, $\r$ is the null vector if and only if $\ell_\alpha(\r) = 0$, $1 \leq \alpha \leq M$.
Indeed, if $\{r_1,\dotsc,r_M\}$ is a basis of $\ker(\bepsilon)$, the canonical basis of $\ker(\bepsilon)'$ (the dual space of $\ker(\bepsilon)$ with respect to the $\|\cdot\|_{1,p,q,\Omega}$) is $\{f_1,\dotsc,f_M\}$, which satisfies $f_\alpha(r_\beta) = \delta_{\alpha\beta}$ for each $1 \leq \alpha,\beta \leq M$.
We extend, via Hahn-Banach Theorem~\cite[Corollary 1.2]{Brezis2011} each $f_\alpha$ to a linear and bounded functional $\ell_\alpha$ on $\ww^{1,p,q}(\Omega)$.

We claim the existence of a positive constant $D = D(p,q,\Omega)$, such that
\begin{equation}\label{bounding-by-HB}
\|\v\|_{1,p,q,\Omega} \leq D \left\{ \|\bepsilon(\v)\|_{0,q,\Omega} + \sum_{\alpha=1}^M |\ell_\alpha(\v)| \right\} \quad \forall \v \in \ww^{1,p,q}(\Omega).
\end{equation}
Indeed, let's assume the contrary.
Then, there exists a sequence $\{\v^k\}_{k\in\nnn}$ on $\ww^{1,p,q}(\Omega)$, such that $\|\v^k\|_{1,p,q,\Omega} = 1$ for all $k \in \nnn$
\begin{equation}\label{functional-norm}
\|\bepsilon(\v^k)\|_{0,q,\Omega} + \sum_{\alpha=1}^M |\ell_\alpha(\v^k)| \xrightarrow{k\to\infty} 0.
\end{equation}
Being $\{\v^k\}_{k\in\nnn}$ bounded in $\ww^{1,p,q}(\Omega)$ by compactness of the embedding $\iota$ there exists a subsequence $\{\v^l\}_{l\in\nnn}$ that converges in $L^p(\Omega)$.
Since the sequence $\{\bepsilon(\v^l)\}_{l\in\nnn}$ is convergent in $\lll_s^q$, the subsequence $\{\v^l\}_{l\in\nnn}$ is a Cauchy sequence with respect to the norm $\|\cdot\| := \|\cdot\|_{0,p,\Omega} + \|\bepsilon(\cdot)\|_{0,q,\Omega}$, hence also with respect to the norm $\|\cdot\|_{1,p,q,\Omega}$ by Korn's inequality in $\ww^{1,p,q}(\Omega)$ (see \autoref{thm:korn-anisotropic}).
As a consequence, there exists $\v \in \ww^{1,p,q}(\Omega)$ such that 
\begin{equation*}
\|\v^l-\v\|_{1,p,q,\Omega} \xrightarrow{l\to\infty} 0.
\end{equation*}
But, by continuity, it follows from \eqref{functional-norm} that $\bepsilon(\v) = \zero$ (that is, $\v \in \ker(\bepsilon)$) and $\ell_\alpha(\v) = 0$, $1 \leq \alpha \leq M$, and therefore $\v = \bf{0}$; in contradiction with the fact that $\|\v^l\|_{1,p,q,\Omega} = 1$ for all $l \in \nnn$.
Thus, \eqref{bounding-by-HB} holds true.

In this way, given any $\v \in \ww^{1,p,q}(\Omega)$, we can choose $\r(\v) \in \ker(\bepsilon)$ such that $\ell_\alpha(\v+\r(\v)) = 0$, for each $1 \leq \alpha \leq M$ (it is equivalent to solve a uniquely determined $M \times M$ system of equations).
Then, by \eqref{bounding-by-HB}
\begin{equation*}
\|\dot \v\|_{1,p,q,\Omega} = \inf_{\r \in \ker(\bepsilon)} \|\v + \r\|_{1,p,q,\Omega} \leq \|\v + \r(\v)\|_{1,p,q,\Omega} \leq D\|\bepsilon(\v)\|_{0,q,\Omega} = D \|\bepsilon(\dot \v)\|_{0,q,\Omega},
\end{equation*}
which completes the proof.

\item By means of contradiction, assume that there exists a sequence $\{\v^k\}_{k\in\nnn}$ in $\ww^{1,p,q}(\Omega)$ such that $\|\v^k\|_{1,p,q,\Omega} = 1$ for each $k \in \nnn$ and
\begin{equation}\label{quotient-to-normal-contradictionstatement}
\|\v^k\|_{0,p,\Omega} + \|\bepsilon(\v^k)\|_{0,q,\Omega} \xrightarrow{k\to\infty} 0.
\end{equation}
By definition of the norm $\|\cdot\|_{1,p,q,\Omega}$ in $\dot \ww^{1,p,q}(\Omega)$, there exists a sequence $\{\r^k\}_{k\in\nnn}$ in $\ker(\bepsilon)$ that satisfies
\begin{align*}
\|\v^k-\r^k\|_{1,p,q,\Omega}
&< \|\dot \v^k\|_{1,p,q,\Omega} + \frac{1}{k}\\
& \leq \|\v^k\|_{1,p,q,\Omega} + \frac{1}{k}
\leq 2.
\end{align*}
The space $\ker(\bepsilon)$ being finite-dimensional, the inequality $\|\r^k\|_{1,p,q,\Omega} \leq \|\v^k-\r^k\|_{1,p,q,\Omega} + \|\v^k\|_{1,p,q,\Omega} \leq 3$ for all $k \in \nnn$ implies the existence of a subsequence $\{\r^l\}_{l\in\nnn}$ that converges in $\ww^{1,p,q}(\Omega)$ to an element $\r \in \ker(\bepsilon)$.
Besides, by Korn's inequality in $\dot \ww^{1,p,q}(\Omega)$ and noting that $\v^l-\r^l \in \dot \v^l$,
\begin{multline*}
\|\v^l-\r^l\|_{1,p,q,\Omega} < \|\dot \v^l\|_{1,p,q,\Omega} + \frac{1}{l} \leq C \|\bepsilon(\dot \v^l)\|_{0,q,\Omega} + \frac{1}{l}\\
= C \|\bepsilon(\v^l-\r^l)\|_{0,q,\Omega} + \frac{1}{l}
= C \|\bepsilon(\v^l)\|_{0,q,\Omega} + \frac{1}{l}
\xrightarrow{l\to\infty} 0;
\end{multline*}
which readily implies that $\|\v^l-\r\|_{1,p,q,\Omega} \xrightarrow{l\to\infty} 0$.
Hence, $\|\v^l-\r\|_{0,p,\Omega} \xrightarrow{l\to\infty} 0$, which force $\r$ to be $\bf{0}$ since $\|\v^l\|_{0,p,\Omega} \xrightarrow{l\to\infty} 0$ (cf.~\eqref{quotient-to-normal-contradictionstatement}).
Thus, we reach to the conclusion that $\|\v^l\|_{1,p,q,\Omega} \xrightarrow{l\to\infty} 0$, which contradicts $\|\v^l\|_{1,p,q,\Omega} = 1$.
\end{enumerate}
\end{proof}

\subsection{Extension to continuous mappings}\label{subsection:extension}
Consider a continuous map $F:\ww^{1,p,q}(\Omega)\to\rrr$, different from the $\|\cdot\|_{1,p,q,\Omega}$ norm (cf.~\eqref{anisotropic-norm}). Let $N(F)$ be the set of zeros of $F$, that is
\begin{align*}
    N(F):=\Big\{\v\in \ww^{p,q}(\Omega):\,F(\v) = 0\Big\}.
\end{align*}
Note that the set $N(F)$ could be the empty set. We assume that this set satisfies any of the two conditions below
\begin{align}
 N(F)\cap \ker(\bepsilon) = \{\zero\}\quad\text{or}\quad N(F)\cap \ker(\bepsilon) = \emptyset.\label{eq:zerocap}
\end{align}
Korn's inequality in $\ww^{1,p,q}(\Omega)$ for general continuous maps is stated and proven below.

\begin{theorem}\label{thm:nonlinearkorn}
Assume $\Omega$ is a bounded domain of $\rrr^d$ with compact, Lipschitz boundary such that $\ker(\bepsilon)$ is finite-dimensional.
Let $p,q \in (1,+\infty)$ such that the embedding $\iota: W^{1,p,q}(\Omega) \hookrightarrow L^p(\Omega)$ is compact (see \autoref{lem:anisotropic-injections}).
Then, there is a constant $C>0$ such that
\begin{align}
\|\u\|_{1,p,q,\Omega} \leq C\Big(\|\bepsilon(\u)\|_{0,q,\Omega} + |F(\u)|\Big),\quad\forall\, \u \in 
\ww^{1,p,q}(\Omega).\label{eq:extendedphikorns}
\end{align}
\end{theorem}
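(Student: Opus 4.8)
The plan is to argue by contradiction through a compactness argument, closely following the structure of part~\ref{it:quotientspace} of \autoref{thm:korn-quotientspace}, but replacing the finite family of functionals $\ell_\alpha$ by the single continuous map $F$ and invoking hypothesis \eqref{eq:zerocap} at the very last step.

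Suppose \eqref{eq:extendedphikorns} fails for every constant $C$. Then I would extract a sequence $\{\u^k\}_{k\in\nnn}$ in $\ww^{1,p,q}(\Omega)$ with $\|\u^k\|_{1,p,q,\Omega}=1$ for all $k$ and
\begin{equation*}
\|\bepsilon(\u^k)\|_{0,q,\Omega} + |F(\u^k)| \xrightarrow{k\to\infty} 0.
\end{equation*}
Since $\{\u^k\}_{k\in\nnn}$ is bounded in $\ww^{1,p,q}(\Omega)$ and the embedding $\iota$ into $\ll^p(\Omega)$ is compact by hypothesis, I pass to a subsequence $\{\u^l\}_{l\in\nnn}$ converging in $\ll^p(\Omega)$. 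As $\{\bepsilon(\u^l)\}_{l\in\nnn}$ converges (to $\zero$) in $\lll^q_s(\Omega)$, the subsequence is Cauchy with respect to $\|\cdot\|=\|\cdot\|_{0,p,\Omega}+\|\bepsilon(\cdot)\|_{0,q,\Omega}$. Applying \autoref{thm:korn-anisotropic} to the differences $\u^l-\u^m$ upgrades this to a Cauchy property in $\|\cdot\|_{1,p,q,\Omega}$, so by completeness of $\ww^{1,p,q}(\Omega)$ there exists $\u$ with $\|\u^l-\u\|_{1,p,q,\Omega}\to 0$, and in particular $\|\u\|_{1,p,q,\Omega}=1$.

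Passing to the limit, the continuity of $\bepsilon$ and of $F$ with respect to $\|\cdot\|_{1,p,q,\Omega}$ yields $\bepsilon(\u)=\zero$ and $F(\u)=0$, so that $\u\in N(F)\cap\ker(\bepsilon)$. This is precisely where \eqref{eq:zerocap} is used: if $N(F)\cap\ker(\bepsilon)=\emptyset$, the mere existence of such a $\u$ is already a contradiction; if instead $N(F)\cap\ker(\bepsilon)=\{\zero\}$, then $\u=\zero$, contradicting $\|\u\|_{1,p,q,\Omega}=1$. In both cases the contradiction establishes \eqref{eq:extendedphikorns}.

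The argument is essentially routine once \autoref{thm:korn-anisotropic} and the compact embedding are available; I expect the only delicate point to be the passage $F(\u^l)\to F(\u)$, which requires $F$ to be continuous exactly with respect to the norm $\|\cdot\|_{1,p,q,\Omega}$ in which convergence takes place. Since this is precisely the standing assumption on $F$, no extra work is needed, and the two alternatives in \eqref{eq:zerocap} are dispatched uniformly by the single observation that the limit $\u$ must belong to $N(F)\cap\ker(\bepsilon)$.
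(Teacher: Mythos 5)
Your proof is correct and follows essentially the same contradiction-by-compactness argument as the paper: normalize a contradicting sequence, use the compact embedding to get an $\ll^p$-convergent subsequence, upgrade it to a Cauchy sequence in $\ww^{1,p,q}(\Omega)$ via \autoref{thm:korn-anisotropic}, pass to the limit using continuity of $F$ and $\bepsilon$, and dispatch the two alternatives of \eqref{eq:zerocap} exactly as the paper does. The only cosmetic difference is that you extract the strongly $\ll^p$-convergent subsequence directly from compactness of $\iota$, whereas the paper first invokes reflexivity of $\ww^{1,p,q}(\Omega)$ (hence $p,q\in(1,+\infty)$) to obtain a weakly convergent subsequence and then applies the compact embedding; both routes are valid.
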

\begin{proof}
By contradiction, suppose there is a sequence $\{\u^k\}_{k\in \nnn} \in \ww^{1,p,q}(\Omega)$ such that

\begin{align*}
\|\u^k\|_{1,p,q,\Omega} = 1,\quad \|\bepsilon(\u^k)\|_{0,q,\Omega} + |F(\u^k)| \xrightarrow{k\to\infty} 0
\end{align*}
Since $\{\u^k\}_{k\in\nnn}$ is a bounded sequence in the $\|\cdot\|_{1,p,q,\Omega}$ norm and $\ww^{1,p,q}(\Omega)$ is reflexive (for $1 < p,q < +\infty$), we know from~\cite[Theorem 3.18]{Brezis2011} that there is a subsequence $\{\u^{k_l}\}_{l\in\nnn}$ of $\{\u^k\}_{k\in\nnn}$ and $\u\in \ww^{1,p,q}(\Omega)$ such that $\u^{k_l} \xrightarrow{l\to\infty} \u$ weakly in $\ww^{1,p,q}(\Omega)$.
The embedding $\iota: W^{1,p,q}(\Omega) \hookrightarrow L^p(\Omega)$ being compact, it further implies that $\u^{k_l} \to \u$ strongly in $\ll^p(\Omega)$.

In turn, we see that $|F(\u^k)| \xrightarrow{k\to\infty} 0$ and $\|\bepsilon(\u^k)\|_{0,q,\Omega} \xrightarrow{k\to\infty} 0$.
Besides, from \autoref{thm:korn-anisotropic},
\begin{align*}
 \|\u^{k_l}-\u^{k_m}\|_{1,p,q,\Omega} \leq C\,\Big(\|\bepsilon(\u^{k_l})-\bepsilon(\u^{k_m})\|_{0,q,\Omega} + 
\|\u^{k_l}-\u^{k_m}\|_{0,p,\Omega}\Big) \quad \forall l,m \in \nnn
\end{align*}
From the observations above we have that $\{\u^{k_l}\}_{l\in\nnn}$ is a Cauchy sequence in $\ww^{1,p,q}(\Omega)$ and thus $\u^{k_l}\xrightarrow{l\to\infty} \u$ strongly in $\ww^{1,p,q}(\Omega)$.
The continuity of $F$ then gives $|F(\u^{k_l})| \xrightarrow{l\to\infty} |F(\u)|$ and therefore $\u\in N(F)$.
Also, the fact that $\|\bepsilon(\u^k)\|_{0,q,\Omega}\xrightarrow{k\to\infty} 0$ and the continuity of $\bepsilon$ in $\ww^{1,p,q}(\Omega)$ implies that $\u\in \ker(\bepsilon)$ that is, $\u$ belongs to $N(F)\cap \ker(\bepsilon)$.
That is, if the second condition of \eqref{eq:zerocap} holds, we have reached a contradiction.
Otherwise, the first condition of \eqref{eq:zerocap} would imply that $\u = \zero$, condition that crashes with $\u^{k_l}\to \u$ strongly in $\ww^{1,p,q}(\Omega)$, because from it it follows that $1 = \|\u^{k_l}\|_{1,p,q,\Omega}\to \|\u\|_{1,p,q,\Omega} = 1$.
\end{proof}
\begin{remark}
    Note that the functional $F$ is meant to be different from the $\|\cdot\|_{1,p,q,\Omega}$ norm as this case corresponds to the usual Korn's inequality in \autoref{thm:korn-anisotropic} and it is used in the proof of the result above.
\end{remark}

\subsection{A note on Poincare's inequality}\label{subsect:poincare}
The results already obtained for Korn's inequality in the spaces $\ww^{1,p,q} (\Omega)$ can also be proved, with no much more work, for the classical Poincare inequality. In fact, define
\begin{equation}
\ker(\nabla) := \{\v \in \ww^{1,p,q}(\Omega): \nabla \v = \mathbf{0} \quad \text{in} \quad \Omega\}.
\end{equation}
Using both~\cite[Corollary 3.1.6]{Hormander2003} and~\cite[Theorem 13.17]{Fitzpatrick2009} it readily follows that if $\Omega$ is an open connected set, then
\begin{equation*}
\ker(\nabla) = \big\{ \v \in \ww^{1,p,q}(\Omega): \v = \c \quad \text{a.e.~$\in\Omega$},\quad \c \in \rrr^d \big\},
\end{equation*}
and in particular, it is finite-dimensional with dimension equal to $d$.

\begin{theorem}[Poincare's inequality in the quotient space $\ww^{1,p,q}(\Omega)/\ker(\nabla)$]\label{thm:poincare-quotientspace}
Let $\Omega$ be a bounded domain of $\rrr^d$ with compact, Lipschitz boundary, such that $\ker(\nabla)$ is finite-dimensional.
Define the quotient space
\begin{equation}
\dot \ww^{1,p,q}(\Omega) := \ww^{1,p,q}(\Omega)/\ker(\nabla),
\end{equation}
equipped with the norm
\begin{equation*}
\|\dot \v\|_{1,p,q,\Omega} := \inf_{\r \in \ker(\nabla)} \|\v - \r\|_{1,p,q,\Omega}, \quad \forall\, \dot \v \in \dot \ww^{1,p,q}(\Omega),
\end{equation*}
the space $\dot \ww^{1,p,q}(\Omega)$ is a Banach space.
Then:
If $1 \leq p,q \leq +\infty$ are such that the embedding $\iota: W^{1,p,q}(\Omega) \hookrightarrow L^p(\Omega)$ is compact (see Lemma \ref{lem:anisotropic-injections}), then there exists a constant $\dot C := \dot C(\Omega,p,q)$ such that Poincare's inequality in $\dot \ww^{1,p,q}(\Omega)$ holds, that is,
\begin{equation*}
\|\dot \v\|_{1,p,q,\Omega} \leq \dot C \|\nabla(\dot \v)\|_{0,q,\Omega}, \quad \forall \dot \v \in \dot \ww^{1,p,q}(\Omega),
\end{equation*}
where $\nabla(\dot \v) := \nabla \w $ for any $\w \in \dot \v$.
\end{theorem}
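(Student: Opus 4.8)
The plan is to follow the proof of part~\ref{it:quotientspace} of \autoref{thm:korn-quotientspace} essentially verbatim, replacing the strain tensor $\bepsilon$ by the full gradient $\nabla$ and $\ker(\bepsilon)$ by $\ker(\nabla)$ throughout; the completeness of the quotient follows, as before, from the fact that $\ker(\nabla)$ is a closed finite-dimensional subspace of a Banach space. Write $N := \dim\ker(\nabla) = d$ and fix a basis $\{r_1,\dotsc,r_N\}$ of $\ker(\nabla)$. Exactly as in \autoref{thm:korn-quotientspace}, a corollary of the Hahn--Banach theorem~\cite[Corollary 1.2]{Brezis2011} produces bounded linear functionals $\ell_1,\dotsc,\ell_N$ on $\ww^{1,p,q}(\Omega)$ that extend the dual basis of $\ker(\nabla)$, so that a given $\r \in \ker(\nabla)$ is the null vector if and only if $\ell_\alpha(\r) = 0$ for all $1 \leq \alpha \leq N$.

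The core step is to establish the auxiliary estimate
\begin{equation*}
\|\v\|_{1,p,q,\Omega} \leq D \Big( \|\nabla\v\|_{0,q,\Omega} + \sum_{\alpha=1}^N |\ell_\alpha(\v)| \Big), \quad \forall\,\v \in \ww^{1,p,q}(\Omega),
\end{equation*}
for some constant $D = D(p,q,\Omega) > 0$. I would argue by contradiction: suppose there is a sequence $\{\v^k\}_{k\in\nnn}$ with $\|\v^k\|_{1,p,q,\Omega} = 1$ while the right-hand side tends to $0$. Since $\{\v^k\}_{k\in\nnn}$ is bounded in $\ww^{1,p,q}(\Omega)$, compactness of the embedding $\iota\colon W^{1,p,q}(\Omega) \hookrightarrow L^p(\Omega)$ yields a subsequence convergent in $\ll^p(\Omega)$; because simultaneously $\|\nabla\v^k\|_{0,q,\Omega} \to 0$, this subsequence is Cauchy directly in the norm $\|\cdot\|_{1,p,q,\Omega} = \|\cdot\|_{0,p,\Omega} + \|\nabla\cdot\|_{0,q,\Omega}$, and hence converges to some $\v \in \ww^{1,p,q}(\Omega)$. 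Continuity then forces $\nabla\v = \zero$ (so $\v \in \ker(\nabla)$) together with $\ell_\alpha(\v) = 0$ for every $\alpha$, whence $\v = \zero$, contradicting $\|\v\|_{1,p,q,\Omega} = 1$.

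With the auxiliary estimate in hand, the passage to the quotient is the same bookkeeping as in \autoref{thm:korn-quotientspace}. Given $\v \in \ww^{1,p,q}(\Omega)$, I would pick $\r(\v) \in \ker(\nabla)$ with $\ell_\alpha(\v + \r(\v)) = 0$ for each $\alpha$, which amounts to solving a uniquely determined $N \times N$ linear system. Applying the estimate to $\v + \r(\v)$ and using $\nabla(\v + \r(\v)) = \nabla\v$ makes the functional sum vanish, giving
\begin{equation*}
\|\dot\v\|_{1,p,q,\Omega} \leq \|\v + \r(\v)\|_{1,p,q,\Omega} \leq D \|\nabla\v\|_{0,q,\Omega} = D \|\nabla(\dot\v)\|_{0,q,\Omega},
\end{equation*}
which is the desired inequality with $\dot C := D$.

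The main point to watch is the Cauchy step in the contradiction argument, and here the situation is in fact \emph{simpler} than for Korn's inequality: in \autoref{thm:korn-quotientspace} one must invoke \autoref{thm:korn-anisotropic} to upgrade control of $\bepsilon(\v^k)$ into control of the full gradient, whereas for Poincar\'e the gradient already appears explicitly in the norm $\|\cdot\|_{1,p,q,\Omega}$, so no auxiliary inequality on the whole space is needed. Consequently the proof rests only on the compactness of $\iota$ and the finite-dimensionality of $\ker(\nabla)$, both guaranteed by the hypotheses.
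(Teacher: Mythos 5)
Your proposal is correct and matches the paper's intended argument exactly: the paper's proof of \autoref{thm:poincare-quotientspace} is precisely ``follow the proof of part~\ref{it:quotientspace} of \autoref{thm:korn-quotientspace} with $\bepsilon$ replaced by $\nabla$,'' and you correctly identified the one genuine simplification, namely that the Cauchy step no longer needs \autoref{thm:korn-anisotropic} because the full gradient already appears in $\|\cdot\|_{1,p,q,\Omega}$ --- which is exactly why the paper notes the result extends to $q=1$.
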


\begin{proof}
The proof follows almost verbatim as the proof of \autoref{it:quotientspace} of \autoref{thm:korn-quotientspace} by replacing the role of $\bepsilon$ by $\nabla$, and without utilizing \autoref{thm:korn-anisotropic} and hence it is allowed for $q$ to take the value $1$.
\end{proof}

Consider a continuous map $F:\ww^{1,p,q}(\Omega)\to\rrr$, different from the $\|\cdot\|_{1,p,q,\Omega}$ norm (cf.~\eqref{anisotropic-norm}). Let $N(F)$ be the set of zeros of $F$, that is
\begin{align*}
    N(F):=\Big\{\v\in \ww^{p,q}(\Omega):\,F(\v) = 0\Big\}.
\end{align*}
Note that the set $N(F)$ could be the empty set. We assume that this set satisfies any of the two conditions below
\begin{align}
 N(F)\cap \ker(\nabla) = \{\zero\}\quad\text{or}\quad N(F)\cap \ker(\nabla) = \emptyset.
\end{align}

Poincare's inequality in $\ww^{1,p,q}(\Omega)$ for general continuous maps is stated and proven below.

\begin{theorem}\label{thm:poincare-nonlinear}
Assume $\Omega$ is a bounded domain of $\rrr^d$ with compact, Lipschitz boundary.
Let $1 < p,q < \infty$ be such that the embedding $\iota: W^{1,p,q}(\Omega) \hookrightarrow L^p(\Omega)$ is compact (see \autoref{lem:anisotropic-injections}).
Then, there is a constant $C>0$ such that
\begin{align}
\|\u\|_{1,p,q,\Omega} \leq C\Big(\|\nabla \u\|_{0,q,\Omega} + |F(\u)|\Big),\quad\forall\, \u \in 
\ww^{1,p,q}(\Omega).
\end{align}
\end{theorem}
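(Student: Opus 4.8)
The plan is to mirror the proof of \autoref{thm:nonlinearkorn} almost line for line, replacing the strain tensor $\bepsilon$ by the full gradient $\nabla$. The key structural simplification is that $\|\nabla\,\cdot\,\|_{0,q,\Omega}$ is already one of the two terms defining the norm $\|\cdot\|_{1,p,q,\Omega}$, so no analogue of Korn's inequality (\autoref{thm:korn-anisotropic}) is needed in order to recover control of the gradient from the right-hand side.

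First I would argue by contradiction: if the claimed inequality fails, then for every $k \in \nnn$ there exists $\u^k \in \ww^{1,p,q}(\Omega)$ with $\|\u^k\|_{1,p,q,\Omega} = 1$ and $\|\nabla \u^k\|_{0,q,\Omega} + |F(\u^k)| \to 0$ as $k \to \infty$. Since $1 < p,q < \infty$ the space $\ww^{1,p,q}(\Omega)$ is reflexive, so the bounded sequence $\{\u^k\}_{k\in\nnn}$ admits a subsequence $\{\u^{k_l}\}_{l\in\nnn}$ converging weakly to some $\u \in \ww^{1,p,q}(\Omega)$; the compactness of the embedding $\iota: W^{1,p,q}(\Omega)\hookrightarrow L^p(\Omega)$ from \autoref{lem:anisotropic-injections} then upgrades this to strong convergence $\u^{k_l}\to\u$ in $\ll^p(\Omega)$.

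Next I would show that $\{\u^{k_l}\}_{l\in\nnn}$ is Cauchy in $\ww^{1,p,q}(\Omega)$. Writing $\|\u^{k_l}-\u^{k_m}\|_{1,p,q,\Omega} = \|\u^{k_l}-\u^{k_m}\|_{0,p,\Omega} + \|\nabla(\u^{k_l}-\u^{k_m})\|_{0,q,\Omega}$, the first term tends to $0$ because a strongly $\ll^p$-convergent sequence is Cauchy in $\ll^p(\Omega)$, while the second is bounded by $\|\nabla\u^{k_l}\|_{0,q,\Omega}+\|\nabla\u^{k_m}\|_{0,q,\Omega}$, which tends to $0$ by the defining property of the sequence. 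Hence $\u^{k_l}\to\u$ strongly in $\ww^{1,p,q}(\Omega)$. Continuity of $F$ then yields $F(\u)=\lim_{l}F(\u^{k_l})=0$, so $\u\in N(F)$, and continuity of $\nabla$ on $\ww^{1,p,q}(\Omega)$ yields $\nabla\u=\lim_{l}\nabla\u^{k_l}=\zero$, so $\u\in\ker(\nabla)$; thus $\u\in N(F)\cap\ker(\nabla)$.

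Finally I would invoke the hypothesis on $N(F)\cap\ker(\nabla)$. If this intersection is empty, the membership just derived is already a contradiction; if it equals $\{\zero\}$, then $\u=\zero$, contradicting $\|\u\|_{1,p,q,\Omega}=\lim_{l}\|\u^{k_l}\|_{1,p,q,\Omega}=1$. Either way the contradiction establishes the inequality. I do not anticipate a serious obstacle here: the only essential ingredients are reflexivity (to extract the weak limit) and compactness of the embedding (to pass to strong $\ll^p$ convergence), both guaranteed by the hypotheses. The one point meriting care — and the place where this proof is genuinely \emph{easier} than its Korn counterpart — is the Cauchy estimate, where the passage from the gradient term to the full norm is immediate and does not require \autoref{thm:korn-anisotropic}.
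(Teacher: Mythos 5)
Your proposal is correct and coincides with the paper's own proof, which simply instructs the reader to repeat the argument of \autoref{thm:nonlinearkorn} with $\bepsilon$ replaced by $\nabla$ and observes, exactly as you do, that \autoref{thm:korn-anisotropic} is not needed. Your Cauchy estimate makes explicit why the Korn step drops out (the gradient term is already part of the norm $\|\cdot\|_{1,p,q,\Omega}$), which is precisely the simplification the authors had in mind.
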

\begin{proof}
Its proof follows the same ideas of the one of \autoref{thm:nonlinearkorn} by replacing the role of $\bepsilon$ by $\nabla$.
It does not require the use of \autoref{thm:korn-anisotropic}.
\end{proof}

\section{Conclusions}\label{section:conclusion}
In this paper we proved that the classical Korn inequality, usually posed for vector fields in $\hh^1(\Omega)$, is valid in the anisotropic Sobolev spaces $\ww^{1,p,q}(\Omega)$, under different assumptions on the Sobolev parameters $p$ and $q$. In particular, we point out that \autoref{thm:korn-anisotropic} and \autoref{thm:korn-quotientspace} imply that Korn's inequality (cf. \autoref{eq:1stkorn-anisotropic} and \autoref{eq:2ndkorn-anisotropic}) holds for vector fields in $\ww^{1,1,q}(\Omega)$ as long as $q$ is strictly larger than 1. In fact, Korn's inequality for vector fields in $\ww^{1,1,1}(\Omega)$ cannot hold as previously proved in~\cite{ref:ornstein1962,ref:conti2005}. This follows because $\ww^{1,1,1}(\Omega)$ reduces to the classical Sobolev space $\ww^{1,1}(\Omega)$.

Furthermore, we were also able to extend the results from~\cite{ref:gatica2007,ref:damlamian2018,ref:dominguez2021} and proved that a non-linear version of Korn's inequality (cf. \eqref{eq:extendedphikorns}) holds true in these anisotropic Sobolev spaces (cf. \autoref{thm:nonlinearkorn}).
Similarly, we also proved that Poincare's inequality, and a more general non-linear version of it, hold in this more general Sobolev spaces (cf. results in \autoref{subsect:poincare}). Note that to be able to extend Korn's and Poincare's inequality to a non-linear version, the continuous mappings considered as in \autoref{thm:nonlinearkorn} and \autoref{thm:poincare-nonlinear} cannot share any elements (or only share the zero vector) with the kernels of the strain stress tensor or the gradient, respectively.

In~\cite{Rakosnik1979,Rakosnik1981}, the author studied \textit{anisotropic Sobolev spaces} in a more general way, in which for example, each distributional derivative lies in a possibly different $L^p$-space.
A natural question question would be, then, whether the Korn inequality also holds for this more general setting. Nevertheless, it is not yet clear in what $L^p$-spaces we should measure each component of the symmetric part of the gradient, and hence an appropriate expression for a Korn-type inequality in this general setting is unknown to the authors. This is left as subject of future work.

\section*{Acknowledgments}
\seba\ thanks the support of the Pacific Institute for the Mathematical Sciences (PIMS) through a PIMS postdoctoral fellowship.
The authors are very grateful to prof.~J.~R\'{a}kosn\'{i}k (Institute of Mathematics CAS) for providing us with digital copies of \cite{Rakosnik1979,Rakosnik1981}, and to prof.~Gabriel N.~Gatica (Universidad de Concepci\'on) for his comments which helped improving this manuscript.

\paragraph{Note for the reader}
Please note that this manuscript represents a preprint only and has not been (or is in the process of being) peer-reviewed. A DOI link will be made available for this ArXiv preprint as soon as the peer-reviewed version is published online.

\bibliography{mathscinet-references.bib}

\end{document}